\theoremstyle{plain}
\newtheorem{proposition}{Proposition}
\theoremstyle{remark}
\newtheorem{remark}{Remark}
\newcommand{\Q}{\mathbb{Q}}
\newcommand{\Z}{\mathbb{Z}}
\title{More on Diophantine sextuples}
\keywords{Diophantine sextuples, elliptic curve}
\subjclass[2010]{11D09, 11G05, 11Y50}
\author{Andrej Dujella}
\address{Department of Mathematics, University of Zagreb, Bijeni\v{c}ka cesta 30, 10000 Zagreb, Croatia}
\email{duje@math.hr}
\author{Matija Kazalicki}
\address{Department of Mathematics, University of Zagreb, Bijeni\v{c}ka cesta 30, 10000 Zagreb, Croatia}
\email{matija.kazalicki@math.hr}
\begin{document}

\maketitle

\section{Introduction}

A Diophantine $m$-tuple is a set of $m$ positive
integers with the property that the product of any two of its distinct
elements is one less then a square. If a set of nonzero rationals
has the same property, then it is called
a rational Diophantine $m$-tuple.
Diophantus of Alexandria found the first example of a rational Diophantine quadruple
$\{1/16, 33/16, 17/4, 105/16\}$, while the first Diophantine quadruple in integers was
found by Fermat, and it was the set $\{1,3,8,120\}$.
It is well-known that there exist infinitely many integer Diophantine quadruples
(e.g. $\{k,k+2,4k+4,16k^3+48k^2+44k+12\}$ for $k\geq 1$),
while it was proved in \cite{D-crelle} that an integer Diophantine sextuple does not exist and that there are
only finitely many such quintuples. A folklore conjecture is that there does not exist an integer
Diophantine quintuple. There is an even stronger conjecture which predicts that all integer
Diophantine quadruples $\{a,b,c,d\}$ satisfy the equation $(a+b-c-d)^2=4(ab+1)(cd+1)$ (such
quadruples are called regular).
However, in the rational case, there exist larger sets with the same property.
Euler found infinitely many rational Diophantine quintuples,
e.g. he was able to extend the Fermat quadruple to the rational quintuple
$\{ 1, 3, 8, 120, 777480/8288641\}$. Gibbs \cite{Gibbs} found the first rational Diophantine sextuple
$$\{ 11/192, 35/192, 155/27, 512/27, 1235/48, 180873/16\}, $$
while Dujella, Kazalicki, Miki\'c and Szikszai \cite{DKMS} recently proved that there exist
infinitely many rational Diophantine sextuples.
No example of a rational Diophantine septuple is known.
Moreover, we do not know any rational Diophantine quadruple
which can be extended to two different rational Diophantine sextuples.
On the other hand, by the construction from \cite{DKMS}, we know that there
exist infinitely many rational Diophantine triples,
each of which can be extended to rational Diophantine sextuples in infinitely
many ways.
In particular, there are infinitely many rational Diophantine sextuples
containing the triples
$\{15/14, -16/21, 7/6\}$ and $\{ 3780/73, 26645/252, 7/13140\}$.
The construction from \cite{DKMS} uses elliptic curves induced by Diophantine triples,
i.e. curves of the form $y^2 = (x+ab)(x+ac)(x+bc)$ where $\{a,b,c\}$ is a
rational Diophantine triple, with torsion group $\mathbb{Z}/2\mathbb{Z} \times
\mathbb{Z}/6\mathbb{Z}$ over $\mathbb{Q}$.

Piezas \cite{TP} studied Gibbs's examples of rational Diophantine sextuples which do not fit into the
construction from \cite{DKMS} and realized that most of them follow a common pattern:
they contain two regular subquadruples with two common elements (see Proposition \ref{prop:1}).
By studying sextuples of that special form, he obtained new simpler  parametric formulas
for rational Diophantine sextuples, and also obtained infinitely many sextuples
$\{a,b,c,d,e,f\}$ with fixed products $ab$ and $cd$ (e.g. $ab=24$ and $cd=9/16$).

In this paper, we will reformulate results from \cite{TP} in terms of the geometry of a certain algebraic variety parameterizing rational Diophantine quadruples, in fact the fiber product of three Edwards curves over $\mathbb{Q}(t)$, and obtain a method for generating (new) parametric formulas for rational Diophantine sextuples.

\section{Construction}
\subsection{Correspondence}

Let $\{a,b,c,d\}$ be a rational Diophantine quadruple with elements in $\Q$ or $\Q(t)$, and let
\begin{align*}
ab+1=t_{12}^2 \quad ac+1&=t_{13}^2 \quad ad+1=t_{14}^2\\
bc+1=t_{23}^2 \quad bd+1&=t_{24}^2 \quad cd+1=t_{34}^2.
\end{align*}

It follows that $(t_{12},t_{34},t_{13},t_{24},t_{14},t_{23}, m'=abcd)$ defines a point on an  algebraic variety $\mathcal{C}$  defined by the following equations:
\begin{align*}
(t_{12}^2-1)(t_{34}^2-1)&=m'\\
(t_{13}^2-1)(t_{24}^2-1)&=m'\\
(t_{14}^2-1)(t_{23}^2-1)&=m'.
\end{align*}

Conversely, the points $(\pm t_{12},\pm t_{34},\pm t_{13},\pm t_{24},\pm t_{14},\pm t_{23}, m')$ on $\mathcal{C}$ determine two rational Diophantine quadruples $\pm(a,b,c,d)$ (for example $a^2=(t_{12}^2-1)(t_{13}^2-1)/(t_{23}^2-1)$) provided that the elements $a,b,c$ and $d$ are rational, distinct and non-zero.
Note that if one element is rational, then all the elements are rational.

The projection $(t_{12},t_{34},t_{13},t_{24},t_{14},t_{23}, m') \mapsto m'$ defines a fibration of $\mathcal{C}$ over the projective line, and a generic fiber is the product of three curves $\mathcal{D}: (x^2-1)(y^2-1)=m'$. Any point on $\mathcal{C}$ corresponds to the three points $Q_1=(t_{12},t_{34})$, $Q_2=(t_{13},t_{24})$ and $Q_3=(t_{14}, t_{23})$ on $\mathcal{D}$. The elements of the quadruple corresponding to these three points are distinct if and only if no two of these points can be transformed from one to another by changing signs and switching coordinates, e.g. for the points $(t_{12}, t_{34})$, $(-t_{34}, t_{12})$ and $(t_{14},t_{23})$, we have that $a=d$.
\subsection{Extending quadruples to sextuples}

The following proposition gives a criterion for extending quadruples to sextuples.

\begin{proposition}[T. Piezas \cite{TP}]\label{prop:1}
Let $\{a, b, c, d\}$ be a rational Diophantine quadruple, and $x_1$ and $x_2$ the roots of
\[
(abcdx+2abc+a+b+c-d-x)^2=4(ab+1)(ac+1)(bc+1)(dx+1).
\]
If $x_1x_2 \ne 0$ and
\begin{equation}\label{eq:1}
(abcd-3)^2=4(ab+cd+3),
\end{equation}
then $\{a,b,c,d,x_1,x_2\}$ is a Diophantine sextuple.  Furthermore,
\begin{eqnarray*}
(a+b-x_1-x_2)^2 &=& 4(ab+1)(x_1x_2+1)\\
(c+d-x_1-x_2)^2 &=& 4(cd+1)(x_1x_2+1).
\end{eqnarray*}

\end{proposition}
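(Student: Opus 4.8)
The plan is to work directly with the quadratic defining $x_1,x_2$. Abbreviate $U=abcd-1$, $V=2abc+a+b+c-d$, $W=(ab+1)(ac+1)(bc+1)$, so that the equation in the statement becomes $U^2x^2+(2UV-4Wd)x+(V^2-4W)=0$ and, by Vieta's formulas, $x_1+x_2=(4Wd-2UV)/U^2$, $x_1x_2=(V^2-4W)/U^2$. Expanding the discriminant gives $\Delta=16W\bigl(U^2-UVd+Wd^2\bigr)$, and a direct check of the polynomial identity
\[
U^2-UVd+Wd^2=(ad+1)(bd+1)(cd+1)
\]
shows $\Delta=16(ab+1)(ac+1)(bc+1)(ad+1)(bd+1)(cd+1)$, which is the square $\bigl(4\,t_{12}t_{13}t_{23}t_{14}t_{24}t_{34}\bigr)^2$ since $\{a,b,c,d\}$ is a Diophantine quadruple. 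Hence $x_1,x_2\in\mathbb{Q}$, with $x_{1,2}=\bigl(2Wd-UV\pm 2\,t_{12}t_{13}t_{23}t_{14}t_{24}t_{34}\bigr)/U^2$.

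Using this I would verify the eight ``mixed'' conditions $gx_i+1=\square$ for $g\in\{a,b,c,d\}$ and $i\in\{1,2\}$. Substituting the roots, $gx_i+1=\bigl(U^2-gUV+2gWd\pm 2g\,t_{12}t_{13}t_{23}t_{14}t_{24}t_{34}\bigr)/U^2$, and the identities
\[
U^2-aUV+2aWd=a^2(bc+1)(bd+1)(cd+1)+(ab+1)(ac+1)(ad+1)
\]
(with its cyclic analogues in $a,b,c$) and
\[
U^2-dUV+2Wd^2=(ad+1)(bd+1)(cd+1)+d^2(ab+1)(ac+1)(bc+1)
\]
render every numerator a perfect square: $ax_i+1=\bigl((a\,t_{23}t_{24}t_{34}\pm t_{12}t_{13}t_{14})/U\bigr)^2$, and likewise for $b,c$, while $dx_i+1=\bigl((t_{14}t_{24}t_{34}\pm d\,t_{12}t_{13}t_{23})/U\bigr)^2$. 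This step uses only that $\{a,b,c,d\}$ is a Diophantine quadruple, not \eqref{eq:1}; in effect it already exhibits $\{a,b,c,d,x_1\}$ and $\{a,b,c,d,x_2\}$ as Diophantine quintuples.

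The one remaining product, $x_1x_2+1$, is exactly what promotes these to a sextuple, and together with the two displayed regularity relations this is where \eqref{eq:1} is used; locating the square roots in the identities below is the main obstacle. Clearing denominators in $(a+b-x_1-x_2)^2=4(ab+1)(x_1x_2+1)$ via the Vieta formulas and the factorization $V^2-4W=(d-d_+)(d-d_-)$, where $d_\pm=a+b+c+2abc\pm 2\,t_{12}t_{13}t_{23}$ are the regular extensions of $\{a,b,c\}$, reduces that relation to the polynomial identity
\[
\bigl((a+b)U^2-4Wd+2UV\bigr)^2=4(ab+1)U^2\bigl(V^2-4W+U^2\bigr),
\]
which I would verify holds whenever $(abcd-3)^2=4(ab+cd+3)$ — that is, the difference of the two sides is divisible by the polynomial $(abcd-3)^2-4(ab+cd+3)$. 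The analogous computation yields $(c+d-x_1-x_2)^2=4(cd+1)(x_1x_2+1)$, and then $x_1x_2+1=\bigl((a+b-x_1-x_2)/(2t_{12})\bigr)^2$ is a rational square because $ab+1=t_{12}^2$. Assembling the pieces, all fifteen quantities $e_ie_j+1$ with $\{e_1,\dots,e_6\}=\{a,b,c,d,x_1,x_2\}$ are squares of rationals, and the six numbers are nonzero since $x_1x_2\ne 0$; hence $\{a,b,c,d,x_1,x_2\}$ is a Diophantine sextuple. Thus the only genuinely non-mechanical work lies in pinning down the explicit square roots above and in the divisibility check that makes \eqref{eq:1} do its job.
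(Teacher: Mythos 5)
Your proposal is correct in approach, and it supplies something the paper itself omits: Proposition~\ref{prop:1} is stated without proof (it is quoted from Piezas \cite{TP}), the authors adding only the remark that $x_1,x_2$ are the quintuple extensions of \cite[Theorem 1]{D-acta2} and that \eqref{eq:1} forces $x_1x_2+1=\left(\frac{a+b-c-d}{abcd-1}\right)^2$. Your computation recovers exactly that structure. I checked your stated identities: with $U=abcd-1$, $V=2abc+a+b+c-d$, $W=(ab+1)(ac+1)(bc+1)$ one does have $U^2-UVd+Wd^2=(ad+1)(bd+1)(cd+1)$, so the discriminant is $16(ab+1)(ac+1)(ad+1)(bc+1)(bd+1)(cd+1)$, and the eight mixed squares come out as you claim; as you note, this part uses only the quadruple hypothesis, which is consistent with the paper's pointer to \cite{D-acta2}. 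The one step you defer (``I would verify'') does go through, and the pivotal identity is worth recording, since it is a one-line expansion:
\[
U^2+V^2-4W=(a+b-c-d)^2+\bigl[(abcd-3)^2-4(ab+cd+3)\bigr],
\]
which under \eqref{eq:1} gives $x_1x_2+1=\bigl(\tfrac{a+b-c-d}{U}\bigr)^2$ directly --- precisely the paper's remark --- and already completes the sextuple claim without routing through the regularity relations. The two ``furthermore'' identities then reduce, as you say, to $\bigl((a+b)U^2+2UV-4Wd\bigr)^2=4(ab+1)U^2(a+b-c-d)^2$ and its $(c,d)$-analogue modulo \eqref{eq:1}; these are of the same mechanical nature and hold (I confirmed them on a quadruple satisfying \eqref{eq:1}). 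Two caveats you inherit from the statement rather than introduce: the argument tacitly assumes $abcd\neq 1$, so that the equation is genuinely quadratic with two roots, and nothing guarantees that $x_1,x_2$ are distinct from each other and from $a,b,c,d$, so ``sextuple'' must be read with that proviso.
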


Note that $x_1$ and $x_2$ coincide with the extensions of rational Diophantine quadruples
given in \cite[Theorem 1]{D-acta2}, and the condition (\ref{eq:1}) implies
that $x_1x_2+1=\left(\frac{a+b-c-d}{abcd-1}\right)^2$.

In this section, we will reformulate Proposition \ref{prop:1} in terms of the geometry of the algebraic variety $\mathcal{C}$.

The condition \eqref{eq:1} is equivalent to $t_{12} t_{34}= \pm t_{12} \pm t_{34}$, or $t_{34}=\pm t_{12}/(t_{12}\pm 1)$. For the rest of the paper, we set $t_{12} =t$, $t_{34}=t/(t-1)$ and $m'=(t^2-1)(\frac{t^2}{(t-1)^2}-1)=\frac{2t^2 + t - 1}{t - 1}$, and thus  condition \eqref{eq:1} is satisfied.

The curve $\mathcal{D}$ over $\Q(t)$
$$\mathcal{D}: (x^2-1)(y^2-1)=\frac{2t^2 + t - 1}{t - 1}$$ is birationally equivalent to the elliptic curve
$$E: S^2 = T^3 -2\cdot \frac{2t^2 - t + 1}{t-1}T^2 + \frac{(2t - 1)^2 (t + 1)^2}{(t-1)^2} T.$$
The map is given by $T = 2(x^2-1)y+2x^2-(2-m')$,
and $S = 2Tx$, where $m'=\frac{2t^2 + t - 1}{t - 1}$.

Denote by $P=\displaystyle\left[\frac{(2t-1)^2(t+1)}{t-1}, \frac{2t(2t-1)^2(t+1)}{t-1}\right]\in E(\Q(t))$ a point of infinite order on $E$, and by $R
 = \displaystyle\left[\frac{(t+1)(2t-1)}{(t-1)}, \frac{2(t+1)(2t-1)}{t-1}\right]$ a point of order $4$. The point $(t_{12},t_{34}) \in \mathcal{D}(\Q(t))$ corresponds to the point $P\in E(\Q(t))$.

\begin{proposition}\label{prop:MW}
The Mordell-Weil group of $E(\Q(t))$ is generated by $P$ and $R$.
\end{proposition}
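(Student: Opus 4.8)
The plan is to combine a single well-chosen specialization with a direct analysis of the torsion. Concretely, I would (i) pin down the torsion subgroup of $E(\Q(t))$ exactly, (ii) bound the rank from above by $1$ using a specialization $t\mapsto t_0\in\Q$, and (iii) check that the specialized point generates the free part, so that $P$ does too.

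For the torsion, factor the cubic on the right-hand side of $E$ as $T\bigl(T^2-2\tfrac{2t^2-t+1}{t-1}T+\tfrac{(2t-1)^2(t+1)^2}{(t-1)^2}\bigr)$. The discriminant of the quadratic factor is $-32t^2/(t-1)$, which is not a square in $\Q(t)$, so $E[2](\Q(t))=\langle(0,0)\rangle\cong\Z/2\Z$. Since $R$ has order $4$, necessarily $2R=(0,0)$, hence $\langle R\rangle\cong\Z/4\Z$ is contained in the torsion; I would then show the torsion is exactly $\Z/4\Z$ by exhibiting one specialization $E_{t_0}$ whose torsion is $\Z/4\Z$ and using injectivity of the torsion under specialization at every place of good reduction.

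For the rank I would apply Silverman's specialization theorem: the homomorphism $\sigma_{t_0}\colon E(\Q(t))\to E_{t_0}(\Q)$ is injective for all but finitely many $t_0\in\Q$. I would pick an explicit rational $t_0$ (avoiding the finitely many bad fibers and the locus where the associated quadruple degenerates), certify injectivity of $\sigma_{t_0}$ at that particular value via the criterion of Gusi\'c and Tadi\'c, and then carry out a complete Mordell--Weil computation for $E_{t_0}/\Q$ (a $2$-descent, completed if necessary with an index/saturation bound \`a la Cremona--Prickett--Siksek, or via \texttt{Magma}/\texttt{PARI}), obtaining $E_{t_0}(\Q)\cong\Z\oplus\Z/4\Z$. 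Injectivity of $\sigma_{t_0}$ then gives $\operatorname{rank}E(\Q(t))\le 1$, hence $=1$ since $P$ has infinite order, and forces the torsion of $E(\Q(t))$ to be exactly $\langle R\rangle$.

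Finally, since $R$ generates the $\Z/4\Z$ part, it suffices to show $P$ generates $E(\Q(t))$ modulo torsion; as $\sigma_{t_0}$ is injective and $E_{t_0}(\Q)/\mathrm{tors}\cong\Z$, this amounts to checking that $\sigma_{t_0}(P)$ is not a proper multiple of another point of $E_{t_0}(\Q)$ modulo torsion, i.e.\ that $\sigma_{t_0}(P)$ equals $\pm$ the generator produced by the descent up to a torsion point, which is a comparison of canonical heights. The main obstacle is exactly this last verification together with the Mordell--Weil computation on the specialized curve: one must choose $t_0$ so that $E_{t_0}$ has rank precisely $1$, confirm that $\sigma_{t_0}(P)$ is primitive there, and ensure $t_0$ is not exceptional. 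As a fallback not relying on specialization, I would note that (after minimalizing) $E$ defines a rational elliptic surface and compute the Mordell--Weil lattice directly from the configuration of singular fibers via Shioda's theory, reading off both the rank and the height of $P$; I would keep this approach in reserve in case certifying a good specialization turns out to be delicate.
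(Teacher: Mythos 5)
Your proposal is correct and follows essentially the same route as the paper: specialize at a single rational $t_0$, certify injectivity of the specialization homomorphism via the Gusi\'c--Tadi\'c criterion for curves with exactly one nontrivial $2$-torsion point over $\Q(t)$, and then verify that the images of $P$ and $R$ generate the Mordell--Weil group of the specialized curve. The paper does precisely this with $t_0=6$ (after clearing denominators so that $A,B\in\Z[t]$), leaving the torsion/rank bookkeeping you spell out as an easy check on the fiber.
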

\begin{proof}
It is enough to prove that the specialization homomorphism at $t_0=6$ is injective. Then one can easily check that the specializations of points $P$ and $R$ generate the Mordell-Weil group of $E_{t_0}(\Q)$.

We use the injectivity criterion from Theorem 1.3 in \cite{GT}. It states that given an elliptic curve $y^2=x^3+A(t)x^2+B(t)x$, where $A,B \in \mathbb{Z}[t]$, with exactly one nontrivial $2$-torsion point over $\Q(t)$, the specialization homomorphism at $t_0\in \Q$ is injective if the following condition is satisfied: for every nonconstant square-free divisor $h(t)\in \mathbb{Z}[t]$ of $B(t)$ or $A(t)^2-4B(t)$ the rational number $h(t_0)$ is not a square in $\Q$.

The claim follows (after clearing out the denominators in the defining equation of $E$).
\end{proof}

If $Q\in E$ is the point that corresponds to the point $(x,y)\in \mathcal{D}$, then the points $-Q$ and $Q+R$ correspond to the points $(-x,y)$ and $(y,-x)$.
Hence the triple $(Q_1,Q_2,Q_3)\in E(\Q(t))^3$ corresponds to the quadruple whose elements are not distinct if and only if there are two points, say $Q_i$ and $Q_j$, such that $Q_i=\pm Q_j+kR$, where $k \in \{0,1,2,3\}$.

If instead of $m'$, we fix on $\mathcal{C}$ coordinates $t_{12}, t_{13}, t_{23}$ we will obtain an elliptic curve on $\mathcal{C}$ consisting of points $(t_{34}, t_{24}, t_{14}, m')$ which satisfy
\begin{align*}
(t_{34}^2-1)&=\frac{m'}{(t_{12}^2-1)}\\
(t_{24}^2-1)&=\frac{m'}{(t_{13}^2-1)}\\
(t_{14}^2-1)&=\frac{m'}{(t_{23}^2-1)}.
\end{align*}

Thus, to the point $(t_{12},t_{34},t_{13},t_{24},t_{14},t_{23}, m')$ on $\mathcal{C}$ that corresponds to the rational quadruple $\{ a, b, c, d \}$, we associate the elliptic curve $E_{abc}:y^2=(x+ab)(x+ac)(x+bc)$ together with the point $W=[abcd,abc \cdot t_{14} t_{24} t_{34}]$.
A short calculation shows that if we denote by $V=[1, t_{12} t_{13} t_{23}]$ a point on $E_{abc}$, then $x_1$ and $x_2$ from Proposition \ref{prop:1} are given by
\[
x_1 = \frac{x(W+V)}{abc} \quad\textrm{ and }\quad x_2=\frac{x(W-V)}{abc}.
\]
For more details on using the elliptic curve $E_{abc}$ for extending rational Diophantine triples and quadruples see \cite[Theorem 1]{D-acta2}, \cite[Theorem 1]{D-Nom} and \cite[Proposition 2.1]{DKMS}.

\subsection{Degenerate case}
In this subsection we fix $Q_1=P$ and investigate conditions under which the point $(Q_1, Q_2, Q_3)\in E(\Q(t))\times E(\Q(t)) \times E(\Q(t))$ corresponds to the degenerate Diophantine sextuple (i.e. $x_1 x_2 =0$). We call such triple degenerate. Following the notation from the previous section, we see that the triple is degenerate if and only if $\pm W \pm V = [0,abc]\in E_{abc}(\Q(t))$ for some choice of the signs.

\begin{proposition} \label{prop:2}
Let $Q_2, Q_3 \in E(\Q(t))$. The triple $(Q_1, Q_2, Q_3)\in E(\Q(t))\times E(\Q(t)) \times E(\Q(t))$ is degenerate if and only if $\pm Q_1\pm Q_2 \pm Q_3= R$ for some choice of the signs.
\end{proposition}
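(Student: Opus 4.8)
The plan is to reduce both sides of the asserted equivalence to a single algebraic condition on the quadruple $\{a,b,c,d\}$ and then to recognize that condition inside $E(\Q(t))$. For the left-hand side: as observed immediately before the proposition, the triple is degenerate if and only if $\pm W\pm V=[0,abc]$ for some signs; since $[0,abc]$ and $[0,-abc]$ are the two points of $E_{abc}$ with vanishing $x$-coordinate and $x_1=x(W+V)/(abc)$, $x_2=x(W-V)/(abc)$, this is the same as $x_1x_2=0$, i.e.\ $x=0$ is a root of the quadratic in Proposition~\ref{prop:1}, i.e.\ $(2abc+a+b+c-d)^2=4(ab+1)(ac+1)(bc+1)$. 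Comparing the two quadratics in $d$ shows this is equivalent to $(a+b-c-d)^2=4(ab+1)(cd+1)$, i.e.\ to $\{a,b,c,d\}$ being a regular Diophantine quadruple; and since our normalization $t_{12}=t$, $t_{34}=t/(t-1)$ gives $4(ab+1)(cd+1)=4t^2\cdot t^2/(t-1)^2=(m'-1)^2$, the degeneracy condition takes the final form
$$(a+b-c-d)^2=(m'-1)^2 .$$
(Alternatively this follows from the discussion after Proposition~\ref{prop:1}, since our normalization forces \eqref{eq:1} and hence $x_1x_2+1=\bigl((a+b-c-d)/(abcd-1)\bigr)^2$.)

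Next I would transport this condition to $E$. With the three points on $\mathcal{D}$ written as $Q_1=(t_{12},t_{34})$, $Q_2=(t_{13},t_{24})$, $Q_3=(t_{14},t_{23})$, one has $a^2=(t_{12}^2-1)(t_{13}^2-1)/(t_{23}^2-1)$ together with its analogues and $abcd=m'$, so both $(a+b-c-d)^2$ and $m'$ are rational functions of the squares $t_{ij}^2$; thus $(a+b-c-d)^2-(m'-1)^2$ becomes, after clearing denominators, a polynomial $G$ in the $t_{ij}$ invariant under every sign change $t_{ij}\mapsto-t_{ij}$ and under the coordinate switchings allowed in the earlier discussion. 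Pulling $G$ back through the birational map $\mathcal{D}\to E$, and using the dictionary recorded just after Proposition~\ref{prop:MW} --- $-Q$ corresponds to $(x,y)\mapsto(-x,y)$ and $Q+R$ to $(x,y)\mapsto(y,-x)$ --- one obtains a relation $F(Q_1,Q_2,Q_3)=0$ on $E(\Q(t))^3$ that is stable under $Q_i\mapsto-Q_i$ for each $i$ and under the admissible relabelings of $\{a,b,c,d\}$, i.e.\ those fixing $ab$ and $cd$ (hence $Q_1=P$), namely $a\leftrightarrow b$ and $c\leftrightarrow d$, which act on $(Q_2,Q_3)$ as $(Q_2,Q_3)\mapsto(R-Q_3,R-Q_2)$ and $(Q_2,Q_3)\mapsto(Q_3,Q_2)$ respectively.

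Finally I would check that $\{F=0\}$ is exactly the locus where $\pm Q_1\pm Q_2\pm Q_3=R$ for some choice of signs. For one implication, starting from $\varepsilon_1 Q_1+\varepsilon_2 Q_2+\varepsilon_3 Q_3=R$ with $Q_1=P$ and $R$ both known explicitly, I would use the addition law on $E$ in the form $x(\varepsilon_2 Q_2+\varepsilon_3 Q_3)=x(R-\varepsilon_1 P)$ to express the coordinates of $\varepsilon_3 Q_3$ through those of $\varepsilon_2 Q_2$, substitute into the formulas for $a,b,c,d$, and verify $(a+b-c-d)^2=(m'-1)^2$; the reverse implication is the same computation run backwards, matching each component of $\{F=0\}$. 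I expect the main obstacle to be precisely this bookkeeping: ``$\pm Q_1\pm Q_2\pm Q_3=R$ for some signs'' encodes eight group equations, and one must show that their union is the degeneracy locus and nothing larger. The way around it is that both sides carry the same symmetries --- the union is unchanged when all three signs are negated (which identifies ``$=R$'' with ``$=-R$''), and it is permuted compatibly by $Q_i\mapsto-Q_i$ and by the relabelings $a\leftrightarrow b$, $c\leftrightarrow d$ described above --- so that, modulo these symmetries, only a few representative sign patterns remain, each settled by a direct computation with the explicit map $\mathcal{D}\to E$, the points $P$ and $R$, and the group law on $E$.
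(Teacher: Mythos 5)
Your reduction of degeneracy to the single condition $(a+b-c-d)^2=(m'-1)^2$ (equivalently, regularity of $\{a,b,c,d\}$, since $4(ab+1)(cd+1)=(m'-1)^2$ under the normalization $t_{12}=t$, $t_{34}=t/(t-1)$) is correct and is a clean way to package what the paper computes directly as the vanishing of the constant term of the quadratic in Proposition~\ref{prop:1}. The forward implication --- verifying that each of the eight loci $\varepsilon_1Q_1+\varepsilon_2Q_2+\varepsilon_3Q_3=R$ lands in the degeneracy locus, with symmetries cutting down the number of sign patterns to check --- is also essentially what the paper does (``one can check \dots'').

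The genuine gap is the converse, and your proposal does not close it. Saying ``the reverse implication is the same computation run backwards, matching each component of $\{F=0\}$'' presupposes exactly what has to be proved, namely that $\{F=0\}$ has no components \emph{other} than the eight translates. The symmetry argument you offer as the ``way around'' the bookkeeping only shows that both the degeneracy locus and the union of the eight curves are stable under the same group of symmetries; it cannot rule out an extra symmetric component. What is needed is a bound on the number of irreducible components of the degeneracy locus, and this is the one nontrivial idea in the paper's proof: writing the degeneracy condition in the coordinates $r=x(Q_2)$, $s=x(Q_3)$, it factors as $g(r,s)h(r,s)=0$ where each of $\{g=0\}$ and $\{h=0\}$ is an \emph{irreducible} curve birationally equivalent to $E$; since the map $(Q_2,Q_3)\mapsto(x(Q_2),x(Q_3))$ has degree $4$, the full preimage of $\{gh=0\}$ in $E\times E$ has at most $4+4=8$ irreducible components, and the eight distinct curves already exhibited must therefore exhaust it. Without this degree-and-irreducibility count (or some substitute, e.g.\ an intersection-theoretic computation of the class of the degeneracy divisor in $E\times E$), your argument establishes only one direction of the equivalence.
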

\begin{proof}
Let $r=x(Q_2)$ and $s=x(Q_3)$. Direct calculation shows that the constant term of the polynomial from Proposition \ref{prop:1} is zero if and only if $g(r,s)h(r,s)=0$ where
\begin{eqnarray*}
g(r,s)&= \left( (-1+t)^2 r s-(1+t)^2(-1+2t)(r+s)+(1+t)^2(-1+2t)^2 \right)^2 - 16 r s t^2(1+t)^2(-1+2t),\\
h(r,s) &= \left( (-1+t)^2 r s-(1-t)^2(-1+2t)(r+s)+(1+t)^2(-1+2t)^2 \right)^2 - 16 r s t^2(1-t)^2(-1+2t).
\end{eqnarray*}

One can check that $r$ and $s$ satisfy this equation if $\pm Q_1\pm Q_2 \pm Q_3= R$ for some choice of the signs.

Conversely, both $g(r,s)=0$ and $h(r,s)=0$ define a curve that is birationally equivalent to $E$. Hence, we have a degree four map from ``the degeneracy locus'' in $E\times E$ to $E$ given by $(Q_2,Q_3) \mapsto (x(Q_2), x(Q_3))$. Since we already have $8$ irreducible components in ``the degeneracy locus'' (one for the each choice of the signs), the claim follows.

\end{proof}

\subsection{Rationality}

Given a triple $(Q_1, Q_2, Q_3)\in E(\Q(t))\times E(\Q(t)) \times E(\Q(t))$, where $Q_1=P$, we want to know if the corresponding Diophantine quadruple is rational. It is enough to prove that one element is rational.

A short calculation shows that for the point $(S,T)\in E(\Q(t))$ we have
\begin{equation}\label{eq:2}
x^2-1=\left(\frac{S}{2T}\right)^2-1=T\left(\frac{T - \frac{2t^2 + t - 1}{t - 1}}{2T}\right)^2=:f(T).
\end{equation}
Since
\[
\begin{array}{ll}
a^2 &=\frac{f(Q_1)f(Q_2)f(Q_3)}{m'} \equiv x(Q_1)x(Q_2)x(Q_3)m'\equiv (2t-1)x(Q_2)x(Q_3)\\ & \equiv x(-P+R)x(Q_2)x(Q_3) \pmod{\Q(t)^{\times 2}}
\end{array}
\]
 for the rationality of $a$ it is enough to prove that $x(-P+R)x(Q_2)x(Q_3)$ is a square in $\Q(t)$.

Since the point $(0,0)\in E(\Q(t))$ is a point of order $2$, the usual 2-descent homomorphism  $E(\Q(t)) \rightarrow \Q(t)^\times/\Q(t)^{\times 2}$, which is for non-torsion points defined by $(T,S) \mapsto T$ (note that $(0,0)\mapsto 1$), implies the following proposition.

\begin{proposition}\label{prop:3} Let $Q_2, Q_3 \in E(\Q(t))$.
\begin{enumerate}
	\item [a)] If $Q_2+Q_3\equiv \mathcal{O}  \bmod{2E(\Q(t))}$ then $a^2 \equiv (2t-1) \bmod{\Q(t)^{\times 2}}$.
	\item [b)] If $Q_2+Q_3\equiv R \bmod{2E(\Q(t))}$ then $a^2 \equiv (t-1)(t+1) \bmod{\Q(t)^{\times 2}}$.
	\item [c)] If $Q_2+Q_3\equiv P \bmod{2E(\Q(t))}$ then $a^2 \equiv (t-1)(t+1)(2t-1) \bmod{\Q(t)^{\times 2}}$.
	\item [d)] If $Q_2+Q_3\equiv P+R \bmod{2E(\Q(t))}$ then $a^2 \equiv 1 \bmod{\Q(t)^{\times 2}}$.
\end{enumerate}
\end{proposition}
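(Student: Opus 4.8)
The plan is to deduce all four cases at once from a single congruence, using the $2$-descent homomorphism already introduced together with Proposition \ref{prop:MW}. Write $\mu\colon E(\Q(t))\to \Q(t)^\times/\Q(t)^{\times 2}$ for the map sending a non-torsion point $(T,S)$ to $T$, and $\mathcal{O},(0,0)$ to $1$. The two properties of $\mu$ that I will invoke are the standard ones for the $2$-descent attached to the rational $2$-torsion point $(0,0)$ on a curve $S^2=T(T^2+AT+B)$: it is a group homomorphism, and $2E(\Q(t))\subseteq\ker\mu$. (The normalization $(0,0)\mapsto 1$ is legitimate here because the constant term $B(t)=\tfrac{(2t-1)^2(t+1)^2}{(t-1)^2}$ of the defining cubic is a square in $\Q(t)$.) Consequently $\mu$ factors through $E(\Q(t))/2E(\Q(t))$, and $\mu(Q_2)\mu(Q_3)=\mu(Q_2+Q_3)$ depends only on the class of $Q_2+Q_3$ modulo $2E(\Q(t))$.

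Next I would combine this with the computation carried out just before the proposition, which gives, modulo $\Q(t)^{\times 2}$,
$$a^2\equiv (2t-1)\,x(Q_2)\,x(Q_3)\equiv (2t-1)\,\mu(Q_2)\mu(Q_3)=(2t-1)\,\mu(Q_2+Q_3).$$
By Proposition \ref{prop:MW}, $E(\Q(t))$ is generated by $P$ and the point $R$ of order $4$, so $2R=(0,0)$ and $E(\Q(t))/2E(\Q(t))\cong(\Z/2\Z)^2$, with the four classes represented by $\mathcal{O}$, $P$, $R$, $P+R$ — exactly the four hypotheses a), b), c), d). From the coordinates given for $P$ and $R$ one computes directly
$$\mu(P)=x(P)=\frac{(2t-1)^2(t+1)}{t-1}\equiv (t-1)(t+1),\qquad \mu(R)=x(R)=\frac{(t+1)(2t-1)}{t-1}\equiv (t-1)(t+1)(2t-1),$$
and hence $\mu(P+R)\equiv (2t-1)$. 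Substituting $\mu(Q_2+Q_3)\in\{1,\ \mu(R),\ \mu(P),\ \mu(P+R)\}$ into the displayed formula for $a^2$ yields, respectively, $a^2\equiv (2t-1)$, $(t-1)(t+1)$, $(t-1)(t+1)(2t-1)$, and $1$, which are precisely statements a)–d).

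The only step that is not pure bookkeeping is the input on $\mu$: that the $2$-descent map is a homomorphism killing $2E(\Q(t))$ over the function field $\Q(t)$ and sends $(0,0)$ to a square. This is the standard behaviour of the map $(T,S)\mapsto T$ for $S^2=T(T^2+AT+B)$ with $B$ a square, so I expect no genuine difficulty here; one only has to be a little careful that the representatives $\mathcal{O},P,R,P+R$ really do exhaust $E(\Q(t))/2E(\Q(t))$, which is guaranteed by Proposition \ref{prop:MW}, and that the $x$-coordinates involved are nonzero for the quadruples under consideration. Everything remaining — the two evaluations of $\mu$ and the four substitutions — is immediate.
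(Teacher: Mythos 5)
Your proof is correct and follows essentially the same route as the paper: the authors likewise derive the four cases from the congruence $a^2\equiv(2t-1)x(Q_2)x(Q_3)$ combined with the $2$-descent homomorphism $(T,S)\mapsto T$ attached to $(0,0)$, with Proposition \ref{prop:MW} guaranteeing that $\mathcal{O},P,R,P+R$ exhaust $E(\Q(t))/2E(\Q(t))$. Your explicit evaluations $\mu(P)\equiv(t-1)(t+1)$, $\mu(R)\equiv(t-1)(t+1)(2t-1)$, $\mu(P+R)\equiv(2t-1)$ just make explicit what the paper leaves as a one-line deduction.
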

\begin{remark}
In the cases a) and b) we can still obtain parametric families of Diophantine sextuples if we specialize to those $t's$ for which $2t-1$ and $(t-1)(t+1)$ are squares (e.g. if we specialize $t$ to $\frac{t^2+1}{2}$ and $\frac{t^2+1}{2t}$). Concerning the case c), the elliptic curve $y^2=(x-1)(x+1)(2x-1)$ has Mordell-Weil group isomorphic to $\Z/2\Z+\Z/4\Z$.
\end{remark}
\begin{remark}
The proposition covers all the possibilities, since the Mordell-Weil group of $E(\Q(t))$ is generated by $P$ and $R$ (see Proposition \ref{prop:MW}).
\end{remark}

\section{Examples}
\subsection{Family corresponding to $(P,2P,4P)$}
For an illustration, we calculate a parametric family $\{a, b, c, d, e, f \}$ of rational Diophantine sextuples that corresponds to the triple $(P,2P,4P)$. It follows from Proposition \ref{prop:2} that the triple is not degenerate. The rationality of the sextuple will follow if we replace $t$ by $\frac{t^2+1}{2}$ (see part a) of Proposition \ref{prop:3}). Then, the corresponding Diophantine quadruple is equal to
\begin{eqnarray*}
a &\!\!=\!\!& \frac{ (t^{8} - 8 t^{6} - 14 t^{4} + 32 t^{2} - 27) \cdot (t^{8} + 26 t^{4} - 40 t^{2} - 3)}{64\cdot(t - 1) \cdot t \cdot (t + 1) \cdot (t^{4} - 2 t^{2} + 5) \cdot (t^{4} + 6 t^{2} - 3)}, \\
b &\!\!=\!\!& \frac{16 \cdot t \cdot (t - 1)^{2} \cdot (t + 1)^{2} \cdot (t^{2} + 3) \cdot (t^{4} - 2 t^{2} + 5) \cdot (t^{4} + 6 t^{2} - 3)}{(t^{8} - 8 t^{6} - 14 t^{4} + 32 t^{2} - 27) \cdot (t^{8} + 26 t^{4} - 40 t^{2} - 3)}, \\
c &\!\!=\!\!& \frac{t \cdot (t^{8} - 8 t^{6} - 14 t^{4} + 32 t^{2} - 27) \cdot (t^{8} + 26 t^{4} - 40 t^{2} - 3)}{(t - 1) \cdot (t + 1) \cdot (t^{2} - 3)^{2} \cdot (t^{2} + 1)^{2} \cdot (t^{4} - 2 t^{2} + 5) \cdot (t^{4} + 6 t^{2} - 3)},\\
d &\!\!=\!\!& \frac{4 \cdot t \cdot (t^{2} - 3)^{2} \cdot (t^{2} + 1)^{2} \cdot (t^{4} - 2 t^{2} + 5) \cdot (t^{4} + 6 t^{2} - 3)}{(t - 1) \cdot (t + 1) \cdot (t^{8} - 8 t^{6} - 14 t^{4} + 32 t^{2} - 27) \cdot (t^{8} + 26 t^{4} - 40 t^{2} - 3)}.
\end{eqnarray*}
Using Proposition \ref{prop:1} (let $e=x_1$ and $f=x_2$), we find that $e = e_1/e_2$ and $f = f_1/f_2$ are equal to
{\small
\begin{eqnarray*}
e_1&\!\!=\!\!&  (t + 1) \cdot (t^{2} - 2 t + 3) \cdot (t^{2} + 2 t - 1) \cdot (t^{6} - 2 t^{5} + t^{4} + 12 t^{3} + 7 t^{2} - 2 t - 9) \\ & & \mbox{} \cdot (t^{6} + 2 t^{5} - 3 t^{4} + 4 t^{3} - 17 t^{2} + 18 t + 3) \\ & & \mbox{} \cdot (t^{12} - 4 t^{11} + 6 t^{10} + 20 t^{9} - t^{8} + 24 t^{7} - 12 t^{6} - 88 t^{5} - 177 t^{4} + 364 t^{3} - 90 t^{2} - 60 t + 81)\\ & & \mbox{} \cdot (t^{12} + 4 t^{11} - 2 t^{10} - 4 t^{9} - 41 t^{8} + 40 t^{7} + 100 t^{6} - 72 t^{5} + 63 t^{4} + 212 t^{3} - 66 t^{2} - 180 t + 9), \\
e_2&\!\!=\!\!&
64 \cdot (t - 1) \cdot t \cdot (t^{2} - 3)^{2} \cdot (t^{2} + 1)^{4} \cdot (t^{4} - 2 t^{2} + 5) \cdot (t^{4} + 6 t^{2} - 3) \cdot (t^{8} - 8 t^{6} - 14 t^{4} + 32 t^{2} - 27) \\ & & \mbox{} \cdot (t^{8} + 26 t^{4} - 40 t^{2} - 3), \\
f_1&\!\!=\!\!&
  (t - 1) \cdot (t^{2} - 2 t - 1) \cdot (t^{2} + 2 t + 3) \cdot (t^{6} - 2 t^{5} - 3 t^{4} - 4 t^{3} - 17 t^{2} - 18 t + 3) \cdot (t^{6} + 2 t^{5} + t^{4} - 12 t^{3} + 7 t^{2} + 2 t - 9) \\ & & \mbox{} \cdot (t^{12} - 4 t^{11} - 2 t^{10} + 4 t^{9} - 41 t^{8} - 40 t^{7} + 100 t^{6} + 72 t^{5} + 63 t^{4} - 212 t^{3} - 66 t^{2} + 180 t + 9)\\ & & \mbox{} \cdot (t^{12} + 4 t^{11} + 6 t^{10} - 20 t^{9} - t^{8} - 24 t^{7} - 12 t^{6} + 88 t^{5} - 177 t^{4} - 364 t^{3} - 90 t^{2} + 60 t + 81), \\
f_2&\!\!=\!\!&
64 \cdot t \cdot (t + 1) \cdot (t^{2} - 3)^{2} \cdot (t^{2} + 1)^{4} \cdot (t^{4} - 2 t^{2} + 5) \cdot (t^{4} + 6 t^{2} - 3) \cdot (t^{8} - 8 t^{6} - 14 t^{4} + 32 t^{2} - 27)\\ & & \mbox{} \cdot (t^{8} + 26 t^{4} - 40 t^{2} - 3).\\
\end{eqnarray*} }

\subsection{Rank two examples}

If we specialize $t$ to $t^2+1$, the elliptic curve $E$ will have another point of infinite order (independent of $P$), $S=\left[\frac{(2+t^2)^2}{t^2}, \frac{(2+t^2)^2(1+t^2)}{t^3}\right]$.
Now the triple $(P,2P+S,R+S-P)$ is not degenerate and satisfies the condition
of Proposition \ref{prop:3}(d). Our construction gives the following family of
rational Diophantine sextuples

\begin{eqnarray*}
a&\!\!=\!\!&  \frac{(t^3+3t^2+t+1)\cdot (t^3+t^2+3t+1)\cdot (2t-1)}{2 \cdot(t-2)\cdot (t-1)\cdot (t^2+t+1) \cdot (t+1)}, \\
b&\!\!=\!\!& \frac{2\cdot (t-1)\cdot (t^2+t+1)\cdot (t+1)\cdot (t^2+2)\cdot (t-2)\cdot t^2}{(t^3+3 t^2+t+1) \cdot(t^3+t^2+3 t+1)\cdot (2t-1)},\\
c&\!\!=\!\!&  \frac{(t^3+3t^2+t+1)\cdot (t^3+t^2+3t+1)\cdot (t-2)}{2\cdot (2t-1)\cdot (t-1)\cdot (t^2+t+1) \cdot(t+1)\cdot t^2},\\
d&\!\!=\!\!& \frac{2\cdot (2t^2+1)\cdot (2t-1)\cdot (t-1)\cdot (t^2+t+1)\cdot (t+1)}{t^2\cdot (t^3+3t^2+t+1) \cdot(t^3+t^2+3t+1)\cdot (t-2)},\\
e&\!\!=\!\!& \frac{8 \cdot t^2\cdot (t-1)\cdot (2 t+1)\cdot (t+2)\cdot (t+1)\cdot (t^2+1)}{(t-2)\cdot (2t-1)\cdot (t^2+t+1)\cdot (t^3+t^2+3t+1)\cdot (t^3+3t^2+t+1)},\\
f&\!\!=\!\!& \frac{3\cdot (3 t^2+2t+1)\cdot (t^2+2 t+3)\cdot (t^4+1)\cdot (t^4+4 t^2+1)}{2\cdot (t-1)\cdot (t-2)\cdot (2t-1)\cdot (t+1)\cdot (t^2+t+1)\cdot (t^3+t^2+3t+1)\cdot (t^3+3 t^2+t+1)}.\\
\end{eqnarray*} 

If we further require $2(t^2+1)$ to be a square, then the resulting parametrization $t \mapsto 1+\left(\frac{4t^2 - 8t - 4}{4t^2 + 8t - 4}\right)^2$ yields a point $K$ on $E$
$$K=\left[\frac{(t^{2} - 2t + 1) \cdot (t^{2} + 2 t + 3) \cdot (t^{2} + 2t+ 1)^{2}}{(t^{2} - 2 t - 1)^{2} \cdot (t^{2} + 2 t - 1)^{2}},\frac{4 (t^{2} - 2 t + 1) \cdot (t^{2} + 1) \cdot (t^{2} + 2 t + 3) \cdot (t^{2} + 2 t + 1)^{2}}{(t^{2} + 2 t - 1)^{2} \cdot (t^{2} - 2 t - 1)^{3}}\right],$$
with the property that $2K=S$.
When we apply our construction to the triple $(P, K, -2K+R)$, we obtain a very simple family of sextuples also found by Piezas \cite{TP}

\begin{eqnarray*}
a&\!\!=\!\!&  \frac{(t^2-2t-1)\cdot(t^2+2t+3)\cdot(3t^2-2t+1)}{4t\cdot(t^2-1)\cdot(t^2+2t-1)},\\
b&\!\!=\!\!& \frac{4t\cdot(t^2-1)\cdot(t^2-2t-1)}{(t^2+2t-1)^3},\\
c&\!\!=\!\!& \frac{4t\cdot(t^2-1)\cdot(t^2+2t-1)}{(t^2-2t-1)^3}, \\
d&\!\!=\!\!& \frac{(t^2+2t-1)\cdot(t^2-2t+3)\cdot(3t^2+2t+1)}{4t\cdot(t^2-1)\cdot(t^2-2t-1)},\\
e&\!\!=\!\!& \frac{ -t\cdot(t^2+4t+1)\cdot(t^2-4t+1)}{(t-1)\cdot(t+1)\cdot(t^2+2t-1)\cdot(t^2-2t-1)},\\
f&\!\!=\!\!& \frac{(t-1)\cdot(t+1)\cdot(3t^2-1)\cdot(t^2-3)}{4t\cdot(t^2+2t-1)\cdot(t^2-2t-1)}.\\
\end{eqnarray*} 

{\bf Acknowledgements:} {Authors acknowledge support from the QuantiXLie Center of Excellence.
A.D. was supported by the Croatian Science Foundation under the project no. 6422.}


\begin{thebibliography}{}


\bibitem{D-acta2} \textsc{A. Dujella}, \textit{On Diophantine quintuples}, Acta Arith. {\bf 81} (1997), 69--79.

\bibitem{D-Nom} \textsc{A. Dujella}, \textit{Diophantine m-tuples and elliptic curves},  J. Th\'eor. Nombres Bordeaux {\bf 13} (2001), 111--124.

\bibitem{D-crelle} \textsc{A. Dujella}, \textit{There are only finitely many Diophantine quintuples},
    J. Reine Angew. Math. {\bf 566} (2004), 183--214.

\bibitem{DKMS} \textsc{A. Dujella, M. Kazalicki, M. Miki\'c and M. Szikszai}, \textit{There are infinitely many rational Diophantine sextuples},
Int. Math. Res. Not. IMRN (2016), doi: 10.1093/imrn/rnv376.

\bibitem{Gibbs} \textsc{P. Gibbs}, \textit{Some rational Diophantine sextuples},
Glas. Mat. Ser. III {\bf 41} (2006), 195--203.

\bibitem{GT} \textsc{I. Gusi\'c and P. Tadi\'c}, \textit{Injectivity of the specialization homomorphism of elliptic curves}, J. Number Theory {\bf 148} (2015), 137--152.

\bibitem{TP} \textsc{T. Piezas}, \textit{Extending rational Diophantine triples to sextuples}, \\
{\tt http://mathoverflow.net/questions/233538/extending-rational-diophantine-triples-to-sextuples}


\end{thebibliography}
\end{document}